\def\RSthmtxt{theorem~}\newref{thm}{name = \RSthmtxt}}
\def\RSlemtxt{lemma~}\newref{lem}{name = \RSlemtxt}}
\theoremstyle{plain}
\newtheorem{thm}{\protect\theoremname}[section]
  \theoremstyle{plain}
  \newtheorem{lem}[thm]{\protect\lemmaname}
  \theoremstyle{definition}
  \theoremstyle{remark}
  \newtheorem*{claim*}{\protect\claimname}
  \theoremstyle{plain}
  \newtheorem{cor}[thm]{\protect\corollaryname}
  \theoremstyle{plain}
  \newtheorem{prop}[thm]{\protect\propositionname}
  \theoremstyle{definition}
  \newtheorem{example}[thm]{\protect\examplename}
  \theoremstyle{remark}
  \newtheorem{rem}[thm]{\protect\remarkname}
\date{}
  \providecommand{\claimname}{Claim}
  \providecommand{\corollaryname}{Corollary}
  \providecommand{\definitionname}{Definition}
  \providecommand{\examplename}{Example}
  \providecommand{\lemmaname}{Lemma}
  \providecommand{\propositionname}{Proposition}
  \providecommand{\remarkname}{Remark}
\providecommand{\theoremname}{Theorem}
\begin{document}
\global\long\def\invlim{\varprojlim}

\global\long\def\bdu{\bigcupdot}

\global\long\def\du{\cupdot}

\global\long\def\sm{\!\setminus\!}

\global\long\def\res#1{\!\restriction_{#1}}

\global\long\def\explain#1#2{\underset{\underset{\mathclap{#2}}{\downarrow}}{#1}}

\global\long\def\conn#1{#1^{0}}

\global\long\def\Sp{\mathrm{Span}}

\global\long\def\trnobrackets{\mathrm{d}}
\global\long\def\tr#1{\trnobrackets(#1)}

\global\long\def\srnobrackets{\mathrm{sr}}
\global\long\def\sr#1{\srnobrackets(#1)}

\global\long\def\adp{\Yup}

\global\long\def\gen#1#2{\mathrm{Gen}_{#2}(#1)}

\global\long\def\gas#1{\text{\lightning}(#1)}

\global\long\def\cb#1#2{\bar{B}_{#1}(#2)}

\global\long\def\ob#1#2{B_{#1}(#2)}

\title{Gasch{\"u}tz Lemma for Compact Groups}
\author{Tal Cohen and Tsachik Gelander\thanks{The authors acknowledge the support of the ISF-Moked grant 2095/15.}}

\maketitle
\begin{abstract}
We prove the Gasch{\"u}tz Lemma holds for all metrisable compact
groups.
\end{abstract}

\section{Introduction}

For a group $G$, denote by $\tr G$ its minimal number of generators.
A classic result by Gasch{\"u}tz asserts the following:
\begin{lem}[The Gasch{\"u}tz Lemma \cite{Gas}]
Let $G$ be a finite group and $f:G\to H$ an epimorphism. If $H=\left<h_{1},\dots,h_{n}\right>$
for $n\geqslant\tr G$, then there are $g_{1},\dots,g_{n}\in G$ satisfying
$f\left(g_{i}\right)=h_{i}$ for $i=1,\dots,n$ and $G=\left<g_{1},\dots,g_{n}\right>$.
\end{lem}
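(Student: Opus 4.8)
The plan is to prove the more precise statement that the number of generating $n$-tuples of $G$ whose image under $f$ is a fixed generating $n$-tuple of $H$ does not depend on that tuple, and then to extract the lemma by a counting argument. Put $N=\ker f$, and for a finite group $X$ write $\phi_{n}(X)$ for the number of tuples $(x_{1},\dots,x_{n})\in X^{n}$ with $X=\left<x_{1},\dots,x_{n}\right>$. Two preliminary remarks: since $n\geqslant\tr G$, padding a minimal generating tuple of $G$ with copies of the identity yields a generating $n$-tuple, so $\phi_{n}(G)>0$; and since $H$ is a quotient of $G$ we have $\tr H\leqslant\tr G\leqslant n$, so likewise $\phi_{n}(H)>0$, and the tuple $(h_{1},\dots,h_{n})$ of the statement is one of the $\phi_{n}(H)$ generating $n$-tuples of $H$.

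The heart of the argument is a Möbius inversion in the finite subgroup lattice $L(G)$. Fix an arbitrary generating $n$-tuple $\mathbf{h}$ of $H$, and for a subgroup $V\leqslant G$ set $m_{V}(\mathbf{h})=\#\{\mathbf{g}\in G^{n}:\left<\mathbf{g}\right>=V\text{ and }f(\mathbf{g})=\mathbf{h}\}$; note that $m_{V}(\mathbf{h})=0$ unless $f(V)=H$, i.e.\ $VN=G$. For a subgroup $U\leqslant G$ I would count the tuples $\mathbf{g}\in U^{n}$ with $f(\mathbf{g})=\mathbf{h}$: if $UN=G$ then $f\res{U}\colon U\to H$ is onto, so after fixing $u_{i}\in U$ with $f(u_{i})=h_{i}$ the solutions are exactly the $(u_{1}c_{1},\dots,u_{n}c_{n})$ with $c_{i}\in U\cap N$, and there are $|U\cap N|^{n}$ of them; if $UN\neq G$ there are none. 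Classifying these tuples by the subgroup they generate gives, for \emph{every} $U\leqslant G$, the identity $g(U)=\sum_{V\leqslant U}m_{V}(\mathbf{h})$, where $g(U):=|U\cap N|^{n}$ if $UN=G$ and $g(U):=0$ otherwise. Since $g$ depends only on $U$, $N$ and $n$, and not on $\mathbf{h}$, Möbius inversion over $L(G)$ gives $m_{G}(\mathbf{h})=\sum_{U\leqslant G}\mu(U,G)\,g(U)$, so $m_{G}(\mathbf{h})$ is the same for every generating $n$-tuple $\mathbf{h}$ of $H$; write $\delta$ for this common value.

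To finish, count generating $n$-tuples of $G$ by their $f$-image: every generating $n$-tuple of $G$ maps to a generating $n$-tuple of $H$, so $\phi_{n}(G)=\sum_{\mathbf{h}}m_{G}(\mathbf{h})=\delta\cdot\phi_{n}(H)$, the sum over the generating $n$-tuples $\mathbf{h}$ of $H$. Since $\phi_{n}(G)>0$ and $\phi_{n}(H)>0$ we obtain $\delta=\phi_{n}(G)/\phi_{n}(H)>0$. Taking $\mathbf{h}=(h_{1},\dots,h_{n})$ gives $m_{G}(h_{1},\dots,h_{n})=\delta>0$, so there are $g_{1},\dots,g_{n}\in G$ with $G=\left<g_{1},\dots,g_{n}\right>$ and $f(g_{i})=h_{i}$ for all $i$, as required.

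I expect the main obstacle to be the second paragraph: arranging the counting identity $g(U)=\sum_{V\leqslant U}m_{V}(\mathbf{h})$ so that it holds uniformly over all of $L(G)$ — in particular for the degenerate subgroups $U$ with $UN\neq G$, where both sides must vanish — and checking that Möbius inversion in the subgroup lattice is legitimate; the rest is bookkeeping. A more hands-on alternative is to reduce, by induction on $|G|$, to the case where $N$ is a minimal normal subgroup of $G$ (factoring $f$ through $G/M$ for a minimal normal $M\leqslant N$) and then to treat the abelian and non-abelian cases of $N$ separately — in the abelian case $N$ is an irreducible $\mathbb{F}_{p}[H]$-module and a lift of $\mathbf{h}$ is seen to generate either $G$ or a complement of $N$ — but bounding the number of complements in that subcase is fussier than the uniform count, so I would prefer the route above.
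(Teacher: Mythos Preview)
Your proof is correct and follows essentially the same approach as the paper's: both show that the number of generating lifts of $\mathbf{h}$ is independent of $\mathbf{h}$ by observing that the total number of lifts of $\mathbf{h}$ into any subgroup $U\leqslant G$ is $|U\cap N|^{n}$ (or $0$ when $UN\neq G$) and then isolating the generating ones --- you via M\"obius inversion on the subgroup lattice, the paper via induction on $|U\cap N|$, which is the same inclusion--exclusion unwound recursively. The concluding step is also the same idea, your formula $\delta=\phi_{n}(G)/\phi_{n}(H)$ being a mild sharpening of the paper's bare observation that positivity at one generating tuple transfers to all.
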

It is known and straightforward that the Gasch{\"u}tz Lemma holds for profinite groups as well,
where naturally one replaces \emph{generation} by topological
generation and assumes the epimorphism is continuous. However, it does
not hold for arbitrary infinite groups. For example, even for $\mathbb{Z}\to\mathbb{Z}/5\mathbb{Z}$
the generator $2+5\mathbb{Z}$ doesn't admit a generating lift, even
though $\tr{\mathbb{Z}}=1$. 

We will prove the lemma does hold for first-countable (i.e.~metrisable) compact groups.
Moreover, we will give an example that shows the lemma does not hold
for general non-metrisable compact groups.

We assume all groups are Hausdorff, all homomorphisms are continuous and all subgroups are closed. We make the following definitions:

\begin{enumerate}
\item If $g_{1},\dots,g_{n}$ are elements of a topological group $G$,
we say they are \emph{generators }(rather than topological generators) of $G$ if $\overline{\left\langle g_{1},\dots,g_{n}\right\rangle }=G$.
We denote by $\tr G$ the minimal number of generators of the group
$G$. 
\item An epimorphism $f:G\to H$ is \emph{Gasch{\"u}tz }if for every $h_{1},\dots,h_{n}\in H$
with $n\geqslant\tr G$ which generate $H$ there are lifts $g_{1},\dots,g_{n}$
($f(g_{i})=h_{i}$) which generate $G$. It is \emph{super-Gasch{\"u}tz}
if $\ker f$ is locally compact and
\[
\{ (k_{1},\dots,k_{n})\in(\ker f)^{n}|\overline{\left\langle g_{1}k_{1},\dots,g_{n}k_{n}\right\rangle }=G\} 
\]
is of full Haar measure in $(\ker f)^{n}$
(so that ``almost all lifts'' of $h_{1},\dots,h_{n}$ generate $G$).
\item $G$ is \emph{Gasch{\"u}tz }if every open epimorphism $f:G\to H$ is
Gasch{\"u}tz.
\item $G$ is \emph{connectedly Gasch{\"u}tz} if every open epimorphism $f:G\to H$
with a connected kernel is Gasch{\"u}tz.
\item $G$ is \emph{connectedly super-Gasch{\"u}tz} if it is locally compact and every open epimorphism
$f:G\to H$ with a connected kernel is super-Gasch{\"u}tz.
\end{enumerate}

\medskip

In this terminology, the Gasch{\"u}tz Lemma says all profinite groups
are Gasch{\"u}tz. Observe that the restriction to open epimorphisms
is meaningless for compact groups, since all epimorphisms are open.

We now state our main theorem:
\begin{thm}[Main Theorem]
\label{thm:main}Every metrisable compact group is Gasch{\"u}tz and connectedly
super-Gasch{\"u}tz.
\end{thm}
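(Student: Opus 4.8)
The plan is to reduce the metrisable compact case to two pieces that can be attacked separately: the profinite quotient (handled by classical Gaschütz) and the connected component $\conn G$ (where we prove the stronger super-Gaschütz statement), and then to glue them using an inverse limit / extension argument. First I would recall that any compact group $G$ is an extension $1\to\conn G\to G\to G/\conn G\to 1$ with $G/\conn G$ profinite, and that for metrisable $G$ one can write $G$ as an inverse limit of compact Lie groups $G=\invlim G/N_i$ along a decreasing sequence of closed normal subgroups $N_i$ with $\bigcap N_i=\{1\}$. The key reduction is that being Gaschütz passes to inverse limits of this kind: given a generating tuple $(h_1,\dots,h_n)$ in a quotient $H$ with $n\geqslant\tr G$, one builds lifts level by level, at each stage using the Gaschütz property of the relevant finite-dimensional piece to ensure the partial lifts keep generating a dense subgroup, and then takes a limit point (here metrisability is essential, to run a diagonal/compactness argument and to guarantee that a countable intersection of dense $G_\delta$ sets of "good lifts" is nonempty).

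For the connected part I would establish connectedly super-Gaschütz for compact connected Lie groups first. Here $\ker f$ is a compact connected Lie group, hence has a normalized Haar measure, and the set of generating $n$-tuples of a compact connected Lie group is known to be of full measure when $n\geqslant\tr{}$ (indeed $n\geqslant 2$ suffices, and in fact a single "random" element generates a dense subgroup in many cases; more to the point, the complement is contained in a countable union of proper closed subgroups' translates and hence is null). The statement to prove is that for fixed generating $h_i\in H$ and fixed lifts $g_i$, the set of $(k_1,\dots,k_n)\in(\ker f)^n$ with $\overline{\langle g_1k_1,\dots,g_nk_n\rangle}=G$ has full measure; one shows its complement lies in a countable union of proper subvarieties by a dimension/algebraicity argument on the Lie group $\ker f$, exploiting that the condition "$\overline{\langle\cdot\rangle}\neq G$" forces the tuple into the preimage of some maximal closed subgroup, of which there are only countably many "types" up to conjugacy in the Lie setting. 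Passing from Lie groups to general connected compact $G=\invlim G_i$ then uses that full-measure sets pull back well along the projections and that a countable intersection of full-measure sets is full measure.

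The main obstacle I expect is the gluing step: combining the profinite-quotient lift with the connected-kernel lift into a single generating tuple for $G$. Concretely, after choosing $g_i\in G$ whose images in $G/\conn G$ generate that profinite quotient (possible by classical Gaschütz), one must further adjust the $g_i$ within their cosets mod $\conn G$ so that the $g_i$ together with the subgroup they generate actually have dense image in all of $G$, i.e.\ so that the closed subgroup they generate surjects onto $\conn G$ as well. The super-Gaschütz property of $\conn G$ (for the restricted map $\overline{\langle g_1,\dots,g_n\rangle}\cap\conn G$-type considerations) is what makes this work — "almost every" correction by elements of $\conn G$ succeeds — but making the measure-theoretic genericity interact correctly with the non-normal subgroup $\overline{\langle g_1,\dots,g_n\rangle}$ requires care, e.g.\ working with the Frattini-type argument that a subgroup of a compact group surjecting onto $G/\conn G$ and meeting $\conn G$ densely must be all of $G$. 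A secondary subtlety is the inductive hypothesis on $\tr{}$: one needs $\tr{G/N_i}\leqslant\tr G$ and control of how $\tr{}$ behaves along the tower, which follows since $\tr{}$ is monotone under quotients for compact groups, so $n\geqslant\tr G\geqslant\tr{G/N_i}$ at every stage.
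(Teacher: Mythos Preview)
Your architecture differs from the paper's in one crucial respect, and that difference is exactly where your acknowledged ``main obstacle'' sits. You decompose $G$ via its identity component, using $1\to\conn G\to G\to G/\conn G\to 1$; the paper instead decomposes the \emph{kernel} $K$ of the given epimorphism, factoring $G\to G/K$ as $G\to G/K^{\circ}\to G/K$. The first arrow has connected kernel $K^{\circ}$ (handled by connectedly super-Gasch\"utz), the second has profinite kernel $K/K^{\circ}$ (handled by a profinite-kernel lemma that the paper proves for arbitrary compact $G$ via a finite-intersection-property argument needing no metrisability). With this factorisation, ``connectedly Gasch\"utz $\Rightarrow$ Gasch\"utz'' is immediate by composition, and there is no gluing step at all. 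Your gluing, by contrast, has a real gap: after lifting to generators of $G/\conn G$ you must still maintain $f(g_i)=h_i$, so you may only adjust by elements of $K\cap\conn G$, which need not be connected; thus this is not an instance of ``connectedly super-Gasch\"utz for $\conn G$'', and the Frattini-type criterion you invoke is not something that property controls, since the $g_i$ do not lie in $\conn G$. Untangling this essentially forces you back to the $K^{\circ}$-factorisation anyway. Relatedly, the theorem asserts connectedly super-Gasch\"utz for \emph{all} metrisable compact $G$, whereas your outline only establishes it for connected $G$; the paper proves it directly for every compact Lie group and then takes a countable inverse limit.

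On the Lie-group step your sketch has the right flavour but not the mechanism. The bad set of non-generating lifts is not a countable union of translates of proper subgroups; one must account for conjugacy. The paper uses that a compact Lie group has only countably many conjugacy classes of closed subgroups, and for each proper $P$ with $f(P)=H$ parametrises the lifts landing in some conjugate $xPx^{-1}$ by a smooth map from $A\times K_P^{\,n}$ (where $K_P=K\cap P$ and $A$ is a submanifold with $AK_P=K$, $\dim A+\dim K_P=\dim K$). A dimension count gives $\dim A+n\dim K_P<n\dim K$ when $n\geqslant 2$ (using that $K$ connected forces $\dim K_P<\dim K$), so Sard's theorem makes the image null. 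The case $n=1$ is handled separately (then $G$ is abelian and the countably many proper $F$ with $f(F)=H$ each meet $K$ in a null set). Your ``proper subvarieties'' and ``maximal closed subgroup'' language does not quite capture this.
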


\section{Main Theorem}

First, let us explain why being connectedly Gasch{\"u}tz implies being Gasch{\"u}tz. The following lemma is proved in the exact same
way as the original Gasch{\"u}tz Lemma, but we repeat the proof for the
convenience of the reader.
\begin{lem}
\label{lem:fin-ker}Suppose $G$ is a compact group and that $f:G\to H$
is an epimorphism with a finite kernel. Then $f$ is Gasch{\"u}tz.
\end{lem}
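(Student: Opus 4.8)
The plan is to reduce to the classical (finite) Gasch{\"u}tz Lemma, which is legitimate here because $\ker f$ is finite. The key point is that, although $G$ need not be profinite, the set of lifts of a fixed tuple is \emph{finite}, and each such lift generates a closed subgroup of finite index, so a single finite quotient of $G$ controls everything. Fix generators $h_{1},\dots,h_{n}$ of $H$ with $n\ge\tr G$, put $K=\ker f$, and let $F=\{(g_{1},\dots,g_{n})\in G^{n}\mid f(g_{i})=h_{i}\ \forall i\}$ be the set of lifts; since $F$ is a coset of $K^{n}$ it is finite (and nonempty, as $f$ is onto). For $s=(g_{1},\dots,g_{n})\in F$ write $U_{s}=\overline{\langle g_{1},\dots,g_{n}\rangle}$. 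First I would record that $f(U_{s})=H$, since $f(U_{s})$ is a compact, hence closed, subgroup containing all the $h_{i}$; consequently $U_{s}K=f^{-1}(f(U_{s}))=f^{-1}(H)=G$, and counting cosets of $U_{s}$ in $U_{s}K$ by representatives from $K$ yields $[G:U_{s}]=[K:U_{s}\cap K]\le|K|$, so $U_{s}$ is open of finite index. If $U_{s}=G$ for some $s$ we are done, so assume every $U_{s}$ is proper.

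Next I would manufacture the relevant finite quotient. Set $A=\bigcap_{s\in F}U_{s}$: it is closed (an intersection of closed subgroups) and of finite index (an intersection of finitely many finite-index subgroups), and its normal core $W=\bigcap_{g\in G}gAg^{-1}$ is closed, normal and still of finite index (as $A$ has only finitely many conjugates), with $W\le U_{s}$ for every $s\in F$. Then $\bar G:=G/W$ is finite; since $W\trianglelefteq G$ is closed, $f(W)\trianglelefteq H$ is closed, so $f$ descends to an epimorphism $\bar f:\bar G\to\bar H:=H/f(W)$ of finite groups. Writing $\bar h_{i}$ for the image of $h_{i}$ in $\bar H$, the $\bar h_{i}$ generate $\bar H$, and the crucial compatibility is that every lift $(\bar g_{1},\dots,\bar g_{n})\in\bar G^{n}$ of $(\bar h_{1},\dots,\bar h_{n})$ equals the image $\bar s$ of some $s\in F$: lift $\bar g_{i}$ to $g_{i}^{0}\in G$, observe that $f(g_{i}^{0})\in h_{i}f(W)$, and correct $g_{i}^{0}$ by an element of $W$ (using that $f$ maps $W$ onto $f(W)$) to obtain $g_{i}\in G$ with $f(g_{i})=h_{i}$ and $g_{i}W=\bar g_{i}$.

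Finally, apply the classical Gasch{\"u}tz Lemma to $\bar f:\bar G\to\bar H$: since $\tr{\bar G}\le\tr G\le n$ (a quotient needs no more generators) and the $\bar h_{i}$ generate $\bar H$, some lift $(\bar g_{1},\dots,\bar g_{n})$ of $(\bar h_{1},\dots,\bar h_{n})$ generates $\bar G$. By the compatibility this lift is $\bar s$ for some $s\in F$, so $U_{s}W=G$; but $W\le U_{s}$, whence $U_{s}=G$, contradicting that every $U_{s}$ is proper. Hence some lift of $(h_{1},\dots,h_{n})$ generates $G$. The one genuinely new point — and the step I expect to need the most care — is the passage to $\bar G=G/W$: because $G$ is not assumed profinite, the property ``$(g_{i})$ generates $G$'' is not detected by arbitrary finite quotients (think of $\mathrm{SO}(3)$), so it is essential that $W$ be tailored to the finite set $F$ of lifts; the remaining verifications (the coset count for $[G:U_{s}]$, the finiteness of the conjugacy class of $A$, and the lifting correction) are routine.
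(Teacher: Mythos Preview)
Your proof is correct and takes a genuinely different route from the paper's. The paper adapts the classical counting argument directly to the compact setting: it defines $\varphi_n^F(\underline{h})$ as the (finite) number of lifts of $\underline{h}$ to a subgroup $F\le G$ that generate $F$, and shows by induction on $|F\cap\ker f|$ that $\varphi_n^F$ is constant, independent of the generating tuple $\underline{h}$; since some tuple has a generating lift, every tuple does. Your approach instead \emph{reduces} to the finite Gasch\"utz Lemma as a black box: you observe that each of the finitely many lifts generates an open subgroup of index at most $|K|$, pass to the finite quotient $G/W$ tailored to these subgroups, and invoke the classical lemma there. The paper's argument buys slightly more --- the equality $\varphi_n^G(\underline{h})=\varphi_n^G(\underline{h}')$ for any two generating tuples, not merely positivity --- and is self-contained, requiring no appeal to the finite case. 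Your argument, on the other hand, cleanly isolates the one genuinely new ingredient (that all lifts are controlled by a single finite quotient, via the finite-index bound $[G:U_s]\le|K|$) and makes transparent where compactness enters (to ensure $f(U_s)$ is closed and hence equals $H$).
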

\begin{proof}
Let $G$ be a compact group and $f:G\to H$ an epimorphism with a
finite kernel. For a generating system $\underline{h}\in H^{n}$ and
a subgroup $F\leqslant G$, denote by $\varphi_{n}^{F}(\underline{h})$
the (finite) number of lifts of $\underline{h}$ to $F$ which generate $F$.
That is, 
\[
\varphi_{n}^{F}(\underline{h})=\#\{\underline{g}\in F^{n}|\overline{\langle\underline{g}\rangle}=F,\underline{f}(\underline{g})=\underline{h}\}.
\]
\begin{claim*}
For every $F\leqslant G$ and $n\geqslant\tr G$ the map $\varphi_{n}^{F}$
is constant.
\end{claim*}
We fix some $n\geqslant\tr G$ and prove the claim by induction on
$\left|F\cap\ker f\right|$, which is finite by assumption. The case
$F\cap\ker f=\left\{ 1\right\} $ is trivial: $\varphi_{n}^{F}(\underline{h})=1$
regardless of $\underline{h}$, since there is only one lift of $\underline{h}$
and it generates $F$ (since $f$ is an isomorphism). 

Now let $F\leqslant G$ and assume $\varphi_{n}^{E}$ is constant
for all subgroups $E\leqslant G$ with $|E\cap\ker f|<\left|F\cap\ker f\right|$.
Let $\underline{h}\in H^{n}$ be a generating system of $H$; we need
to show $\varphi_{n}^{F}(\underline{h})$ only depends on $F$ and
$n$. Consider the (possibly empty) set $\mathcal{E}$ of all
proper subgroups of $F$ which project onto $H$, 
\[
\mathcal{E}\coloneqq\left\{ E<F\;\middle|\;f\left(E\right)=H\right\} .
\]
Observe there are exactly $\left|F\cap\ker f\right|^{n}$ possible
lifts of $\underline{h}$ to $F$, and clearly each one of them generates
a subgroup which projects onto $H$ (since the projection contains
$\underline{h}$). Therefore, if a lift of $\underline{h}$ to $F$
does not generate $F$, it must generate a subgroup belonging to $\mathcal{E}$;
this implies 
\[
\varphi_{n}^{F}(\underline{h})=\left|F\cap\ker f\right|^{n}-\sum_{E\in\mathcal{E}}|\varphi_{n}^{E}(\underline{h})|.
\]
By the induction hypothesis $\varphi_{n}^{E}$ is constant, so the
right hand side depends only on $F$ and $n$, as needed.

The lemma easily follows: we know there is some generating system
$\underline{g}\in G^{n}$ of $G$, which means $\underline{f}(\underline{g})$
is a generating system of $H$ such that $\varphi_{n}^{G}(\underline{f}(\underline{g}))>0$,
and thus $\varphi_{n}^{G}(\underline{h})>0$ for every generating
system $\underline{h}\in H^{n}$.
\end{proof}
\begin{cor}
\label{cor:profin-ker}Suppose $G$ is a compact group and that $f:G\to H$
is an epimorphism with a profinite kernel. Then $f$ is Gasch{\"u}tz.
\end{cor}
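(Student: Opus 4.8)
The plan is to present $G$ as an inverse limit of compact groups each mapping onto $H$ with a \emph{finite} kernel, apply Lemma~\ref{lem:fin-ker} to each such quotient, and then pass to the limit by a compactness argument on the (compact) space of lifts. The one structural input needed is that, since $K:=\ker f$ is profinite and normal in the compact group $G$, the family $\mathcal{M}$ of open subgroups of $K$ that are normal in $G$ is a neighbourhood basis of $1$ in $K$ (so $\bigcap_{M\in\mathcal{M}}M=\{1\}$). This holds because for any open normal subgroup $M_{0}\trianglelefteq K$ the set $N_{G}(M_{0})=\{g\in G:gM_{0}g^{-1}\subseteq M_{0}\}$ is open --- writing $c:G\times M_{0}\to K$ for the conjugation map, $c^{-1}(M_{0})$ is clopen in $G\times M_{0}$, and $\{g:\{g\}\times M_{0}\subseteq c^{-1}(M_{0})\}$ is open by compactness of $M_{0}$ --- so $M_{0}$ has finitely many $G$-conjugates and $\bigcap_{g\in G}gM_{0}g^{-1}\in\mathcal{M}$ is an open subgroup of $K$ contained in $M_{0}$.

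Fix generators $h_{1},\dots,h_{n}$ of $H$ with $n\geqslant\tr G$ and a single lift $\underline{g}^{0}=(g^{0}_{1},\dots,g^{0}_{n})$ of $\underline{h}=(h_{1},\dots,h_{n})$; since $K\trianglelefteq G$, every lift of $\underline{h}$ is $\underline{g}^{0}\underline{k}:=(g^{0}_{1}k_{1},\dots,g^{0}_{n}k_{n})$ for some $\underline{k}\in K^{n}$. For $M\in\mathcal{M}$, with $q_{M}:G\to G/M$ the quotient map, the induced epimorphism $G/M\to H$ has finite kernel $K/M$ and $\tr{G/M}\leqslant\tr G\leqslant n$, so Lemma~\ref{lem:fin-ker} yields a generating lift of $\underline{h}$ in $G/M$; pulling it back to $G$, the set
\[
A_{M}:=\Bigl\{\underline{k}\in K^{n}\ \Big|\ \overline{\bigl\langle q_{M}(g^{0}_{1}k_{1}),\dots,q_{M}(g^{0}_{n}k_{n})\bigr\rangle}=G/M\Bigr\}
\]
is non-empty; and since membership in $A_{M}$ depends only on $\underline{k}$ modulo the open subgroup $M^{n}$, each $A_{M}$ is clopen, hence closed, in the compact space $K^{n}$.

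The family $\{A_{M}\}_{M\in\mathcal{M}}$ has the finite-intersection property: for $M_{1},\dots,M_{r}\in\mathcal{M}$ one has $M:=M_{1}\cap\dots\cap M_{r}\in\mathcal{M}$ and $A_{M}\subseteq A_{M_{i}}$ for every $i$, since a tuple generating $G/M$ projects to a generating tuple of the further quotient $G/M_{i}$. So by compactness of $K^{n}$ we may choose $\underline{k}\in\bigcap_{M\in\mathcal{M}}A_{M}$; put $\underline{g}:=\underline{g}^{0}\underline{k}$ and $L:=\overline{\langle g_{1},\dots,g_{n}\rangle}$. For each $M\in\mathcal{M}$ the set $q_{M}(L)$ is closed (continuous image of the compact $L$) and dense in $G/M$ (it contains $\langle q_{M}(g_{1}),\dots,q_{M}(g_{n})\rangle$, which is dense since $\underline{k}\in A_{M}$), so $q_{M}(L)=G/M$, i.e.\ $LM=G$. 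As $L$ is closed and $\mathcal{M}$ is a neighbourhood basis of $1$, this forces $L=G$: for $x\in G$ write, for each $M$, $x=\ell_{M}m_{M}$ with $\ell_{M}\in L$ and $m_{M}\in M$; then the net $(m_{M})_{M\in\mathcal{M}}$ (under reverse inclusion) converges to $1$, hence $\ell_{M}=xm_{M}^{-1}\to x$ and $x\in L$. Thus $\underline{g}$ is a generating lift of $\underline{h}$, and $f$ is Gasch{\"u}tz.

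The conceptual content is simply that ``Gasch{\"u}tz with finite kernel, plus compactness, gives Gasch{\"u}tz with profinite kernel''. I expect the only genuine obstacle to be the passage to the limit --- arranging that a single lift generates \emph{all} of the finite quotients $G/M$ simultaneously, which is exactly what the finite-intersection-property argument on $K^{n}$ delivers --- together with the small point that one must work with $G$-invariant (not merely $K$-invariant) open subgroups of $K$ and verify that these still form a neighbourhood basis of the identity.
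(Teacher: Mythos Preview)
Your proof is correct. Both your argument and the paper's reduce to the finite-kernel case (Lemma~\ref{lem:fin-ker}) and then use a compactness/finite-intersection-property argument on $K^{n}$ to produce a single lift that generates in every quotient simultaneously. The difference lies in the choice of inverse system: the paper invokes Peter--Weyl to write $G=\varprojlim G_{\alpha}$ as an inverse limit of compact Lie groups and observes that the image $K_{\alpha}=p_{\alpha}(K)$ of the profinite $K$ in each Lie group $G_{\alpha}$ is finite, whereas you work internally with the profinite structure of $K$, taking quotients $G/M$ by $G$-normal open subgroups $M\leqslant K$. Your route is more self-contained---it avoids Peter--Weyl and the (easy but extrinsic) fact that profinite subgroups of Lie groups are finite---and makes transparent exactly which feature of $K$ is being used; the small price is the verification that $G$-invariant open subgroups of $K$ still form a neighbourhood basis of $1$, which you handle cleanly via the tube-lemma argument for openness of $N_{G}(M_{0})$. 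The paper's approach, by contrast, absorbs this bookkeeping into the structure theory of compact groups.
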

\begin{proof}
Denote $K=\ker f$, where $K$ is profinite. Using the Peter-Weyl
theorem one can see $G$ is an inverse limit of Lie groups, $G=\invlim\left(G_{\alpha}\right)_{\alpha\in A}$.\footnote{A convenient reference for the metric case (which is all we will eventually
need) is \cite[4.7]{MZ}.} Moreover, $H\cong\invlim\left(G_{\alpha}/K_{\alpha}\right)_{\alpha\in A}$
for $K_{\alpha}\coloneqq p_{\alpha}\left(K\right)$ (where $p_{\alpha}:G\to G_{\alpha}$
is the projection) and modulo this isomorphism $f:G\to H$ is given
by
\[
\left(g_{\alpha}\right)_{\alpha\in A}\mapsto\left(g_{\alpha}K_{\alpha}\right)_{\alpha\in A}.
\]
The groups $K_{\alpha}=p_{\alpha}\left(K\right)$ are profinite subgroups
of Lie groups and hence are finite. Therefore by \lemref{fin-ker}
the maps $G_{\alpha}\to G_{\alpha}/K_{\alpha}$ are Gasch{\"u}tz.

Let $\underline{h}\in\left(G/K\right)^{n}$ be a generating system
of $G/K$ with $n\geqslant\tr G$, and pick an arbitrary lift $\underline{g}\in G^{n}$.
Denoting by $\pi_{\alpha}:G/K\to G_{\alpha}/K_{\alpha}$ the quotient
map, we get the following commutative diagram:
\[
\xymatrix{G\ar[r]^{{p_{\alpha}}}\ar[d]_{f} & G_{\alpha}\ar[d]^{f_{\alpha}}\\
G/K\ar[r]_{\pi_{\alpha}} & G_{\alpha}/K_{\alpha}
}
\]
Therefore, $\underline{p}_{\alpha}(\underline{g})$ is a lift of $\underline{\pi}_{\alpha}\left(\underline{h}\right)$
via $G_{\alpha}\to G_{\alpha}/K_{\alpha}$. Thus, there is some $\underline{k}\in{K_{\alpha}}^{n}$
such that $\underline{p}_{\alpha}(\underline{g})\cdot\underline{k}$
generates $G_{\alpha}$. In other words, the set
\[
\tilde{\mathcal{L}}_{\alpha}\coloneqq\left\{ \underline{k}\in{K_{\alpha}}^{n}\middle|\overline{\langle\underline{p}_{\alpha}(\underline{g})\underline{k}\rangle}=G_{\alpha}\right\} 
\]
is nonempty for every $\alpha\in A$, and hence 
\[
\mathcal{L}_{\alpha}\coloneqq (\underline{p}_{\alpha}\res{K^{n}})^{-1}(\tilde{\mathcal{L}}_{\alpha})
\]
is nonempty. It is moreover closed as an inverse image of a finite
set. We show the collection $\{\mathcal{L}_{\alpha}\}_{\alpha\in A}$
satisfies the finite intersection property, which means it has a
nonempty intersection.

If $S\subseteq A$ is finite, take $\beta\geqslant S$ and some $\underline{k}\in\mathcal{L}_{\beta}$.
Then $\underline{p}_{\beta}(\underline{k})\in\tilde{\mathcal{L}}_{\beta}$, which
means $\overline{\langle \underline{p}_{\beta}(\underline{g}\cdot\underline{k})\rangle}=G_{\beta}$,
so also
\[
\overline{\langle \underline{p}_{\alpha}(\underline{g}\cdot\underline{k})\rangle}=\overline{\langle \underline{p}_{\beta\alpha}(\underline{p}_{\beta}(\underline{g}\cdot\underline{k}))\rangle}=G_{\alpha},
\]
i.e.~$\underline{k}\in\mathcal{L}_{\alpha}$ for every $\alpha\in S$.
So $\bigcap_{\alpha\in S}\mathcal{L}_{\alpha}\neq\varnothing$, as
needed. Therefore the set
\[
\mathcal{L}\coloneqq\bigcap_{\alpha\in A}\mathcal{L}_{\alpha}
\]
is also nonempty. Pick $\underline{k}\in\mathcal{L}$; by definition
$\underline{p}_{\alpha}(\underline{g}\cdot\underline{k})$ generates $G_{\alpha}$
for every $\alpha\in A$, so $\underline{g}\cdot\underline{k}$ generates
$G$.
\end{proof}
\begin{cor}
A compact group is Gasch{\"u}tz if and only if it is connectedly Gasch{\"u}tz.
\end{cor}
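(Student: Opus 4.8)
The plan is to prove the two implications separately; one is immediate from the definitions, and the other is obtained by factoring through the identity component of the kernel. First, if $G$ is Gasch\"utz then by definition every open epimorphism out of $G$ is Gasch\"utz, so in particular every open epimorphism out of $G$ with connected kernel is, i.e.\ $G$ is connectedly Gasch\"utz. For the converse I would take a compact connectedly Gasch\"utz group $G$ and an arbitrary epimorphism $f\colon G\to H$ (recall that for compact groups all epimorphisms are open, so there is nothing to check about openness). Write $K=\ker f$, a compact normal subgroup of $G$, and let $K^{0}$ be its identity component. Since $K^{0}$ is characteristic in $K$ and $K\trianglelefteq G$, the subgroup $K^{0}$ is a closed connected normal subgroup of $G$.

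Next I would factor $f$ as the composite
\[
G\xrightarrow{\;\pi\;}G/K^{0}\xrightarrow{\;g\;}G/K\cong H,
\]
where $\pi$ is the quotient map and $g$ is the epimorphism induced by $f$. The kernel of $\pi$ is $K^{0}$, which is connected, so $\pi$ is Gasch\"utz by the connectedly-Gasch\"utz hypothesis. The kernel of $g$ is $K/K^{0}$, a compact totally disconnected group, hence profinite; since $G/K^{0}$ is compact, \corref{profin-ker} applies and shows $g$ is Gasch\"utz.

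It then remains to observe that a composite of Gasch\"utz epimorphisms of compact groups is again Gasch\"utz. Here I would first record the elementary fact that $\tr{}$ does not increase under a continuous epimorphism (the closure of the image of a generating system is a generating system of the target), so $\tr G\geqslant\tr{G/K^{0}}\geqslant\tr H$. Now fix $n\geqslant\tr G$ and a generating system $\underline{h}\in H^{n}$: since $n\geqslant\tr{G/K^{0}}$ and $g$ is Gasch\"utz, $\underline{h}$ lifts to a generating system $\underline{m}\in(G/K^{0})^{n}$; since $n\geqslant\tr G$ and $\pi$ is Gasch\"utz, $\underline{m}$ lifts to a generating system $\underline{g}\in G^{n}$. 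Then $f(\underline{g})=g(\pi(\underline{g}))=g(\underline{m})=\underline{h}$, so $f$ is Gasch\"utz, as required. I do not expect any genuine obstacle; the only points to state with a little care are that $K^{0}$ is normal in $G$, that $K/K^{0}$ is profinite, and the monotonicity of $\tr{}$ under quotients, which is exactly what lets a single $n$ serve both lifting steps.
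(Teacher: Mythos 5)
Your proof is correct and takes essentially the same approach as the paper: factor $f$ through $G/K^{\circ}$, handle the connected kernel by hypothesis and the profinite kernel by Corollary \ref{cor:profin-ker}. The one place you go beyond the paper is spelling out explicitly why a composite of Gasch\"utz epimorphisms is Gasch\"utz, noting the monotonicity $\tr G\geqslant\tr(G/K^{\circ})$ under quotients so that a single $n$ serves both lifting steps; the paper leaves this implicit, and it is worth having on record.
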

\begin{proof}
Obviously if a group is Gasch{\"u}tz it is connectedly Gasch{\"u}tz.
Suppose now $G$ is compact and connectedly Gasch{\"u}tz, and let $f:G\to H$
be some epimorphism. Denote $K=\ker f$; by the third isomorphism
theorem, the diagram
\[
\xymatrix{G\ar[r]\ar[d] & G/K^{\circ}\ar[d]\\
G/K\ar[r]\sp(0.3)\sim & \left(G/K^{\circ}\right)/\left(K/K^{\circ}\right)
}
\]
commutes. Thus $G\to G/K$ is the composition $G\to G/K^{\circ}\to\left(G/K^{\circ}\right)/\left(K/K^{\circ}\right)$.
The first map is Gasch{\"u}tz since $G$ is connectedly Gasch{\"u}tz,
and the second map is Gasch{\"u}tz since it has a profinite kernel (Corollary \ref{cor:profin-ker}),
so $G\to H$ is also Gasch{\"u}tz.
\end{proof}
We will use the following well known fact. Since we couldn't find
a convenient reference for its proof, we include it here.
\begin{prop}
\label{prop:conjugacy}A compact Lie group has only countably many conjugacy
classes of subgroups.\footnote{Recall that by ``subgroups" we mean closed subgroups.}
\end{prop}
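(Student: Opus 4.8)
The plan is to bound separately the conjugacy classes of closed connected subgroups and those of the possible ``component groups'', and then to combine them. Since a connected subgroup is contained in the identity component $G^{0}$ and $G/G^{0}$ is finite, it is enough to prove the statement for $G$ connected, together with the auxiliary assertion that \emph{every} compact Lie group has only countably many conjugacy classes of finite subgroups. Granting both, an arbitrary closed subgroup $H\leqslant G$ is determined up to conjugacy by the pair consisting of the conjugacy class of $L:=H^{0}$ and the conjugacy class of the finite subgroup $H/L$ inside the compact Lie group $N_{G}(L)/L$: if $gH_{1}g^{-1}=H_{2}$ with $H_{1}^{0}=H_{2}^{0}=L$ then $g\in N_{G}(L)$ and the images of $H_{1},H_{2}$ are conjugate in $N_{G}(L)/L$, and conversely such a conjugacy lifts. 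As there are countably many choices for $[L]$ and, for each, countably many for $[H/L]$, the set of conjugacy classes is countable.

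For the auxiliary assertion, fix a finite group $\Gamma$ and a compact Lie group $K$. Since $|\Gamma|$ is invertible in $\mathbb{R}$, one has $H^{1}(\Gamma,\mathrm{Lie}(K))=0$ for every linear action of $\Gamma$ on $\mathrm{Lie}(K)$; by the standard local rigidity argument this forces the conjugacy orbit of every $\varphi\in\mathrm{Hom}(\Gamma,K)$ to be open in $\mathrm{Hom}(\Gamma,K)$, which is compact (being cut out in $K^{r}$ by the relations of a finite presentation of $\Gamma$). Hence $\mathrm{Hom}(\Gamma,K)$ has finitely many conjugacy orbits. Every finite subgroup of $K$ is the image of some homomorphism from a finite group, and finite groups are countable up to isomorphism, so $K$ has only countably many conjugacy classes of finite subgroups.

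It remains to treat closed connected subgroups of a compact connected Lie group $G$. Such an $H$ is a compact connected Lie group, so $H=S_{H}\cdot A_{H}$ with $S_{H}:=[H,H]$ a semisimple subgroup --- closed, since it is the integral subgroup of the compact-type semisimple algebra $[\mathrm{Lie}(H),\mathrm{Lie}(H)]$, hence compact --- and $A_{H}:=Z(H)^{0}$ a torus centralising $S_{H}$. First, $G$ has only finitely many conjugacy classes of semisimple connected subgroups: each is the image of a homomorphism from one of the finitely many compact simply connected semisimple groups $\widetilde{S}$ whose Lie algebra embeds into $\mathrm{Lie}(G)$, and, again because $H^{1}$ vanishes in degree one for semisimple Lie algebras, the space $\mathrm{Hom}(\widetilde{S},G)$ is a second countable space on which $G$ acts with open orbits, so it has only countably many orbits (a second countable space contains no uncountable family of disjoint nonempty open sets). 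Fix a representative $S$ of such a class. After conjugating we may assume $S_{H}=S$, so $H=S\cdot A$ with $A$ a subtorus of $C:=Z_{G}(S)^{0}$; conversely every such product has commutator subgroup $S$, and $S\cdot A$ is conjugate to $S\cdot A'$ in $G$ if and only if $A$ and $A'$ are conjugate under $N_{G}(S)$ (which normalises $C$). So the conjugacy classes of $H$ whose commutator subgroup is conjugate to $S$ inject into the set of $N_{G}(S)$-conjugacy classes of subtori of $C$. Finally a compact connected Lie group has only countably many conjugacy classes of subtori: every subtorus is conjugate into a fixed maximal torus $T_{C}\cong(\mathbb{R}/\mathbb{Z})^{r}$, and the subtori of $T_{C}$ correspond bijectively to the rational subspaces of $\mathbb{R}^{r}$, of which there are countably many. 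Summing over the finitely many classes $[S]$ finishes the proof.

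The one genuine input beyond the structure theory of compact Lie groups is the rigidity statement: the vanishing of first cohomology forces conjugacy orbits in the (compact, resp.\ second countable) spaces $\mathrm{Hom}(\Gamma,G)$ and $\mathrm{Hom}(\widetilde{S},G)$ to be open, which is precisely what converts ``rigid'' into ``countably many orbits''. Everything else --- that a subalgebra of a compact-type algebra is again of compact type, the decomposition $H=[H,H]\cdot Z(H)^{0}$, conjugacy of maximal tori, and the description of subtori of a torus --- is classical.
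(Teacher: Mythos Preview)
Your argument is essentially correct, but it follows a completely different route from the paper. The paper's proof is a short ``soft'' topological argument: fix a dimension $n$; the Chabauty space $\mathrm{Sub}_{n}(G)$ is second countable, and by the Montgomery--Zippin theorem every $n$-dimensional subgroup $F$ has a Chabauty neighbourhood all of whose members are conjugate into $F$; extract a countable subcover and use that each $F_i$ has only finitely many $n$-dimensional subgroups. No structure theory, no rigidity, no case analysis --- just one local conjugacy theorem plus second countability.

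Your approach, by contrast, is structural: reduce an arbitrary closed $H$ to the pair $(H^{0},\,H/H^{0}\leqslant N_{G}(H^{0})/H^{0})$, count finite subgroups via $H^{1}(\Gamma,\mathfrak{k})=0$ and compactness of $\mathrm{Hom}(\Gamma,K)$, and count connected subgroups by writing $H=[H,H]\cdot Z(H)^{0}$, invoking rigidity of semisimple subgroups (Whitehead's lemma) and the countability of subtori of a torus. This is heavier machinery but arguably more informative, since it actually tells you \emph{where} the countably many classes come from; the paper's argument is a clean existence proof that gives no such description. Two small remarks: your opening sentence ``it is enough to prove the statement for $G$ connected'' does not match what you then do (the combination argument already works for arbitrary $G$; the reduction to $G^{0}$ is only needed when you later count \emph{connected} subgroups); and you announce ``finitely many conjugacy classes of semisimple connected subgroups'' but the argument you give (open orbits in a second countable space) only yields countably many, which is all you need. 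Neither affects the validity of the proof.
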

\begin{proof}
Let $G$ be a compact Lie group. Obviously, it is enough to prove that
for each $n$ there are countably many conjugacy classes of
subgroups of dimension $n$. Fix some $n$.

Recall that the space $\mathrm{Sub}(G)$ of subgroups of $G$, equipped
with the Chabauty topology, is a compact metric space (see \cite{Bir,SubG}).
It is in particular second countable, and therefore the subset $\mathrm{Sub}_{n}(G)$
of $n$-dimensional subgroups is also second countable. Thus,
every open cover of $\mathrm{Sub}_{n}(G)$ has a countable subcover.

By \cite[Theorem 5.3]{MZ}, for every
$n$-dimensional subgroup $F\leqslant G$ there is an open subset
$U_{F}\subseteq G$ such that $F\subseteq U_{F}$ and such that every
subgroup of $G$ contained in $U_{F}$ can be conjugated to
a subgroup of $F$. For each such $F$, let $\mathcal{U}_{F}\subseteq\mathrm{Sub}_{n}(G)$
be the open set of subgroups contained in $U_{F}$. Since $\{\mathcal{U}_{F}\}$
is an open cover of $\mathrm{Sub}_{n}(G)$, there is a countable
subcover $\{\mathcal{U}_{F_{i}}\}_{i\in\mathbb{N}}$. Moreover, since
each $F_{i}$ is an $n$-dimensional compact Lie group, it has only
finitely many subgroups of dimension $n$. Since every $n$-dimensional
subgroup of $G$ is conjugated to a (necessarily $n$-dimensional
subgroup) of $F_{i}$ for some $i\in\mathbb{N}$, it follows $G$
has only countably many conjugacy classes of $n$-dimensional subgroups.
\end{proof}

The following result is a folklore, yet we will provide a proof for completeness. 

\begin{lem}
If $G$ is a Lie group and $P\leqslant G$ is a subgroup, then there
is an immersed submanifold $A\subseteq G$ such that $PA=G$ and $\dim P+\dim A=\dim G$.
\end{lem}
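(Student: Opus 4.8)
The plan is to build $A$ by patching together local sections of the coset projection $q\colon G\to X:=P\backslash G$. Since $P$ is a closed subgroup, $X$ is a smooth manifold with $\dim X=\dim G-\dim P=:d$ and $q$ is a smooth submersion; the right $G$-action on $X$ is transitive, and for any subset $A\subseteq G$ one has $PA=q^{-1}(q(A))$, because $q(pa)=q(a)$ for $p\in P$, while $q(g)\in q(A)$ forces $g\in Pa\subseteq PA$ for some $a\in A$. Hence it suffices to produce an immersed $d$-dimensional submanifold $A\subseteq G$ with $q(A)=X$.

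First I would produce a local section near the identity coset. Choose a linear complement $\mathfrak m$ of $\mathfrak p:=\operatorname{Lie}(P)$ inside $\mathfrak g$, so $\dim\mathfrak m=d$. Since $\ker(dq_e)=\mathfrak p$, the restriction $dq_e|_{\mathfrak m}\colon\mathfrak m\to T_{q(e)}X$ is an isomorphism, so $Y\mapsto q(\exp Y)$ is a local diffeomorphism at $0\in\mathfrak m$. Shrinking, fix an open neighbourhood $W$ of $0$ in $\mathfrak m$ on which $\exp$ is an embedding into $G$ and $q\circ\exp|_W$ is a diffeomorphism onto an open neighbourhood $V$ of $x_0:=q(e)$; then $\exp(W)$ is an embedded $d$-dimensional submanifold of $G$ mapped diffeomorphically by $q$ onto $V$. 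Translating on the right, for each $g\in G$ the set $\exp(W)g$ is an embedded $d$-submanifold with $q(\exp(W)g)=V\cdot g$, an open neighbourhood of $q(g)$; so $\{V\cdot g\}_{g\in G}$ is an open cover of $X$.

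Next I would choose a countable subcover $\{V\cdot g_i\}_{i\in\mathbb N}$ (possible since the manifold $X$ is second countable) and set
\[
A:=\bigcup_{i\in\mathbb N}\exp(W)\,g_i .
\]
This $A$ is the image of the immersion $\coprod_{i\in\mathbb N}W\to G$, $(i,Y)\mapsto\exp(Y)g_i$, which is an embedding on each summand, so $A$ is an immersed submanifold with $\dim A=d=\dim G-\dim P$; and $q(A)=\bigcup_i V\cdot g_i=X$, whence $PA=q^{-1}(q(A))=G$.

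The only real obstacle is that the translates $\exp(W)g_i$ need not be pairwise disjoint — equivalently, $q$ need not admit a global section, as already the Hopf fibration $SU(2)\to SU(2)/T$ shows — so the immersion above may fail to be injective and $A$ is in general only immersed, not embedded. If an injective immersion is wanted, one can instead start from a smooth triangulation of $X$: over each open simplex $\sigma$, which is contractible and hence contained in some $V\cdot g$, the submersion $q$ admits a section $s_\sigma$, and one takes $A:=\bigcup_\sigma s_\sigma(\sigma)$; the pieces $s_\sigma(\sigma)$ are pairwise disjoint because the sets $q(s_\sigma(\sigma))=\sigma$ are, so $A$ is a genuine immersed submanifold still satisfying $q(A)=X$ and hence $PA=G$, with its top stratum (over the $d$-dimensional simplices) of dimension $d=\dim G-\dim P$.
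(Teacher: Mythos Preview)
Your proof is correct and follows essentially the same route as the paper: build local sections of the coset projection (you do it explicitly via $\exp$ on a complement of $\mathfrak p$, the paper via local triviality of the bundle $G\to G/P$), pass to a countable subcover by second countability, and take the union of the local sections as the immersed submanifold $A$. The closing paragraph on triangulations is superfluous for what the lemma actually asks, and as written would not quite give a pure $d$-dimensional object, since sections over the lower-dimensional simplices contribute lower-dimensional pieces.
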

\begin{proof}
Denote by $\pi:G\to G/P$ the quotient map. For every $gP\in G/P$
take an open neighbourhood $U_{gP}\ni gP$ such that $\pi^{-1}(U_{gP})$
is diffeomorphic to $U_{gP}\times P$ and such that modulo this diffeomorphism
$\pi$ is given by the projection onto the first coordinate. Since
$\{U_{gP}\}_{gP\in G/P}$ is an open cover of the manifold $G/P$,
we can take a countable subcover $\left\{ U_{i}\right\} _{i\in\mathbb{N}}$.
By construction, for every $i\in\mathbb{N}$ we can take a submanifold
$\hat{U}_{i}\subseteq G$ of dimension $\dim G-\dim P$ such that
$\pi\res{\hat{U}_{i}}:\hat{U}_{i}\to U_{i}$ is a diffeomorphism.
Therefore, the countable union $A\coloneqq\bigcup_{i\in\mathbb{N}}\hat{U}_{i}$
is an immersed submanifold of dimension $\dim G-\dim P$ such that
$\pi(A)=G/P$, and therefore $PA=G$.
\end{proof}

We can now complete the proof of the main theorem.

\begin{proof}[Proof of the Main Theorem]
We will prove all first-countable compact groups are connectedly super-Gasch{\"u}tz, and in particular Gasch{\"u}tz. First we will assume $G$ is a compact Lie group, and then the general
case will follow by a standard argument of inverse limits.

Let $G,H$ be compact Lie groups, $f:G\to H$ an epimorphism with a
connected kernel $K$, and $h_{1},\dots,h_{n}\in H$ generators of
$H$ with $n\geqslant\tr G$. 

First we consider the special case $n=1$. Observe that in this case
$G$ must be abelian. Pick some lift $g$ of $h_1$ and denote by
$\mathcal{F}$ the set of proper subgroups of $G$ projecting onto
$H$. Now, we need to show the set of $k\in K$
such that $gk$ doesn't generate $G$ is null.
First observe that $K\cap F$ is null in $K$ for every
$F\in\mathcal{F}$ since $K$ is connected. For each $F\in\mathcal{F}$
pick a lift $r_F\in F$ of $h_1$, and
observe that the set $\{k\in K|\overline{\langle r_F k\rangle}=F\}$
is null since it is contained in $K\cap F$. Since $\mathcal{F}$ is countable (e.g.~by \propref{conjugacy}), we see by left invariance
that
\begin{align*}
\mu\left(\left\{ k\in K\middle|\left\langle {g}{k}\right\rangle \neq G\right\}\right) & \leqslant\sum_{F\in\mathcal{F}}\mu\left(\left\{ {k}\in K\middle|\left\langle {g}{k}\right\rangle =F\right\}\right) \\
 & =\sum_{F\in\mathcal{F}}\mu\left(\left\{ {k}\in K\middle|\left\langle {r}_{F}{k}\right\rangle =F\right\}\right)=0
\end{align*}
(where $\mu$ is the Haar measure of $K$), as needed.

We can now assume $n\geqslant2$. For every subgroup $P\leqslant G$
such that $f(P)=H$, denote 
\[
K_{P}=K\cap P=\ker(f\res P),
\]
and for $i=1,\dots,n$ denote 
\[
\mathcal{L}_{P}^{i}=\left\{ p\in P\middle|f(p)=h_{i}\right\} .
\]
Observe $\mathcal{L}_{P}^{i}$ is a shift of $K_{P}$ (by an arbitrary
lift of $h_{i}$ to $P$), and hence has a well defined measure (induced
from the Haar measure of $K_{P}$). It is moreover a submanifold of
dimension $\dim K_{P}$, which is strictly smaller than $\dim K$
if $P<G$ (since $K$ is connected).

We need to show the subset of tuples in $\prod_{i=1}^{n}\mathcal{L}_{G}^{i}$
generating a proper subgroup of $G$ is null. By \propref{conjugacy},
we can take a countable set $\mathcal{P}$ containing a representative
of the conjugacy class of each proper subgroup $P<G$. Thus a tuple
in $\prod_{i=1}^{n}\mathcal{L}_{G}^{i}$ not generating $G$ must
generate a subgroup of the form $xPx^{-1}$ for some $P\in\mathcal{P}$
and some $x\in G$. That is,
\[
\left\{ \underline{g}\in\prod_{i=1}^{n}\mathcal{L}_{G}^{i}\middle|\overline{\left\langle \underline{g}\right\rangle }\neq G\right\} =\bigcup_{P\in\mathcal{P}}\left\{ \underline{g}\in\prod_{i=1}^{n}\mathcal{L}_{G}^{i}\middle|\overline{\left\langle \underline{g}\right\rangle }=xPx^{-1}\text{ for some }x\in G\right\} .
\]
Since this is a countable union, it is enough to show that 
\[
\left\{ \underline{g}\in\prod_{i=1}^{n}\mathcal{L}_{G}^{i}\middle|\overline{\left\langle \underline{g}\right\rangle }=xPx^{-1}\text{ for some }x\in G\right\} 
\]
is null for each $P\in\mathcal{P}$.

Thus let $P\in\mathcal{P}$, and pick some lifts $p_{1},\dots,p_{n}$
of $h_{1},\dots,h_{n}$ to $P$. Take a submanifold $A\subseteq K$
such that $AK_{P}=K$ and $\dim A+\dim K_{P}=\dim K$. Observe that
since 
\[
AP=AK_{P}P=KP=G,
\]
all subgroups conjugated to $P$ are of the form $aPa^{-1}$ for some
$a\in A$, and hence all lifts of $(h_{1},\dots,h_{n})$ to a subgroup
conjugated to $P$ are of the form $(ak_{1}p_{1}a{}^{-1},\dots,ak_{n}p_{n}a{}^{-1})$
for some $k_{1},\dots,k_{n}\in K_{P}$ and $a\in A$.
In other words, they are all in the image of the map
\begin{align*}
A\times{K_{P}}^{n} & \to\mathcal{L}_{G}^{1}\times\cdots\times\mathcal{L}_{G}^{n}\\
(a,k_{1},\dots,k_{n}) & \to(ak_{1}p_{1}a{}^{-1},\dots,ak_{n}p_{n}a{}^{-1}).
\end{align*}
Since this is a smooth map and 
\begin{align*}
\dim(A\times{K_{P}}^{n}) & =\dim A+n\dim K_{P}=\dim K+(n-1)\dim K_{P}\\
 & <n\dim K=\dim\left(\mathcal{L}_{G}^{1}\times\cdots\times\mathcal{L}_{G}^{n}\right),
\end{align*}
its image is null by Sard's theorem \cite[Theorem 10.5]{Lie}. The
proof for Lie groups is completed.

Since every first-countable compact group is an inverse limit of a
sequence of Lie groups (see e.g.~\cite[4.7]{MZ}),
we now just need to show an inverse limit of a sequence of connectedly
super-Gasch{\"u}tz compact groups is connectedly super-Gasch{\"u}tz.

Let $G=\invlim\left(G_{m}\right)_{m\in\mathbb{N}}$ be an inverse
limit of connectedly super-Gasch{\"u}tz compact groups, let $f:G\to H$
be an epimorphism with a connected kernel and let $\underline{h}\in H^{n}$ be a generating
system such that $n\geqslant\tr G$. We need to prove almost every
lift of $\underline{h}$ to $G$ generates $G$.

Denote $K=\ker f$, $K_{m}=p_{m}\left(K\right)$ (where $p_{m}:G\to G_{m}$
is the projection). Then $H\cong\invlim\left(G_{m}/K_{m}\right)_{m\in\mathbb{N}}$
and modulo this isomorphism $f$ is defined by $(g_{m})_{m\in\mathbb{N}}\mapsto(g_{m}K_{m})_{m\in\mathbb{N}}$.
For every $m\in\mathbb{N}$ the map $G_{m}\to G_{m}/K_{m}$ is super-Gasch{\"u}tz
(since $K_{m}=p_{m}\left(K\right)$ is connected). Pick some lift
$\underline{g}$ of $\underline{h}$, and denote 
\[
\mathcal{L}=\{\underline{k}\in K^{n}|\overline{\langle\underline{g}\cdot\underline{k}\rangle}\neq G\},
\]
which we need to show is null in $K^{n}$. Since $\underline{g}_{m}\coloneqq\underline{p}_{m}(\underline{g})$
is a lift of a generating system of $G_{m}/K_{m}$ (namely the image
of $\underline{h}$), the set 
\[
\mathcal{L}_{m}\coloneqq\{\underline{k}_{m}\in{K_{m}}^{n}|\overline{\langle\underline{g}_{m}\cdot\underline{k}_{m}\rangle}\neq G_{m}\}
\]
is null, and therefore $\tilde{\mathcal{L}}_{m}\coloneqq(\underline{p}_{m}\res{K^{n}})^{-1}(\mathcal{L}_{m})$
is null in $K^{n}$. It follows that $\bigcup_{m\in\mathbb{N}}\tilde{\mathcal{L}}_{m}$
is null. Since the latter set contains $\mathcal{L}$, we are done.
\end{proof}

\section{An Example}

In this section we show that the assumption of first-countability (or equivalently, metrisability)
cannot be omitted.
\begin{example}
A torus of dimension $2^{\aleph_{0}}$ is not Gasch{\"u}tz.
\end{example}
Observe that by Kronecker's theorem $(\mathbb{R}/\mathbb{Z})^{2^{\aleph_{0}}}$
is of topological rank $1$, so it makes sense to ask whether it is
Gasch{\"u}tz. We will in fact prove that in a sense, $(\mathbb{R}/\mathbb{Z})^{2^{\aleph_{0}}}$
is as not Gasch{\"u}tz as a group can be.
\begin{prop}
If $I\subsetneq J$ are index sets with $|I|=|J|=2^{\aleph_{0}}$
then for every $n\in\mathbb{N}$ there are $h_{1},\dots,h_{n}\in(\mathbb{R}/\mathbb{Z})^{I}$
which generate $(\mathbb{R}/\mathbb{Z})^{I}$ but do not admit generating
lifts to $(\mathbb{R}/\mathbb{Z})^{J}$. In particular, $(\mathbb{R}/\mathbb{Z})^{2^{\aleph_{0}}}$
is not Gasch{\"u}tz.
\end{prop}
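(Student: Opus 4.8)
The plan is to translate ``$h_{1},\dots,h_{n}$ topologically generate $(\mathbb{R}/\mathbb{Z})^{I}$'' into a statement of $\mathbb{Z}$-linear independence inside the compact group $(\mathbb{R}/\mathbb{Z})^{n}$, and then to exhibit a \emph{maximal} $\mathbb{Z}$-independent family of cardinality $2^{\aleph_{0}}$ there. The only delicate point is setting up this dictionary; after that the construction is bookkeeping with a $\mathbb{Q}$-basis of $\mathbb{R}^{n}$.

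First I would record the duality criterion. The Pontryagin dual of $(\mathbb{R}/\mathbb{Z})^{I}$ (with the product topology) is the discrete group $\bigoplus_{i\in I}\mathbb{Z}$ of finitely supported integer functions on $I$, a character $(m_{i})_{i}$ acting by $x\mapsto\sum_{i}m_{i}x_{i}$; and a subset of a compact abelian group topologically generates it precisely when no nonzero character annihilates it. Associating to an $n$-tuple $\underline{h}=(h_{1},\dots,h_{n})\in\bigl((\mathbb{R}/\mathbb{Z})^{I}\bigr)^{n}$ the ``columns'' $v_{i}:=\bigl((h_{1})_{i},\dots,(h_{n})_{i}\bigr)\in(\mathbb{R}/\mathbb{Z})^{n}$ for $i\in I$, this says: $\underline{h}$ generates $(\mathbb{R}/\mathbb{Z})^{I}$ if and only if the homomorphism $\bigoplus_{i\in I}\mathbb{Z}\to(\mathbb{R}/\mathbb{Z})^{n}$, $(m_{i})_{i}\mapsto\sum_{i}m_{i}v_{i}$, is injective, i.e.\ the family $(v_{i})_{i\in I}$ is $\mathbb{Z}$-linearly independent.

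Next I would build the family. Writing $(\mathbb{R}/\mathbb{Z})^{n}=\mathbb{R}^{n}/\mathbb{Z}^{n}$ and using $\dim_{\mathbb{Q}}\mathbb{R}^{n}=2^{\aleph_{0}}$, extend the standard basis $e_{1},\dots,e_{n}$ of $\mathbb{Z}^{n}$ to a $\mathbb{Q}$-basis $\mathcal{B}$ of $\mathbb{R}^{n}$. Then $\left|\mathcal{B}\sm\{e_{1},\dots,e_{n}\}\right|=2^{\aleph_{0}}=|I|$, so write $\mathcal{B}\sm\{e_{1},\dots,e_{n}\}=\{b_{i}:i\in I\}$ and set $v_{i}:=b_{i}+\mathbb{Z}^{n}$. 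A ``clear denominators'' argument gives two facts: (a) $(v_{i})_{i\in I}$ is $\mathbb{Z}$-independent, since a $\mathbb{Z}$-relation among the $v_{i}$ lifts to a $\mathbb{Q}$-relation among the $b_{i}$ and the $e_{j}$, which must be trivial; and (b) it is maximal in the strong form that for every $y\in(\mathbb{R}/\mathbb{Z})^{n}$ there exist a positive integer $N$ and integers $m_{i}$ (finitely many nonzero) with $Ny=\sum_{i\in I}m_{i}v_{i}$, obtained by expanding a representative of $y$ in $\mathcal{B}$ and clearing denominators. Now define $h_{1},\dots,h_{n}\in(\mathbb{R}/\mathbb{Z})^{I}$ by $(h_{j})_{i}:=(v_{i})_{j}$; by (a) and the duality criterion they generate $(\mathbb{R}/\mathbb{Z})^{I}$.

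Finally I would rule out generating lifts. Identify $(\mathbb{R}/\mathbb{Z})^{J}\to(\mathbb{R}/\mathbb{Z})^{I}$ with the coordinate projection, so a lift $\underline{\tilde{h}}$ of $\underline{h}$ is an $n$-tuple in $(\mathbb{R}/\mathbb{Z})^{J}$ whose columns indexed by $I$ are exactly the $v_{i}$. Choose $i_{0}\in J\sm I$ and let $w\in(\mathbb{R}/\mathbb{Z})^{n}$ be the column of $\underline{\tilde{h}}$ at $i_{0}$. By (b) there are $N\geqslant1$ and integers $m_{i}$ (finitely many nonzero) with $Nw-\sum_{i\in I}m_{i}v_{i}=0$; since $i_{0}\notin I$, this is a nontrivial $\mathbb{Z}$-relation among the columns of $\underline{\tilde{h}}$, so by the duality criterion $\underline{\tilde{h}}$ does not generate $(\mathbb{R}/\mathbb{Z})^{J}$. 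For the last sentence, take $J$ of cardinality $2^{\aleph_{0}}$ and $I\subsetneq J$ with $|I|=2^{\aleph_{0}}$; since $\tr{(\mathbb{R}/\mathbb{Z})^{J}}=1$ by Kronecker's theorem, any $n\geqslant1$ satisfies $n\geqslant\tr{(\mathbb{R}/\mathbb{Z})^{J}}$, so the (automatically open) projection $(\mathbb{R}/\mathbb{Z})^{J}\to(\mathbb{R}/\mathbb{Z})^{I}$ is an epimorphism that fails to be Gasch\"utz, whence $(\mathbb{R}/\mathbb{Z})^{2^{\aleph_{0}}}$ is not Gasch\"utz. The only step needing care is the duality translation together with choosing $(v_{i})$ maximal rather than merely $\mathbb{Z}$-independent: independence is what makes $\underline{h}$ generate downstairs, while maximality is exactly what obstructs lifting upstairs.
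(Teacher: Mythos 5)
Your proof is correct, and it reorganises the argument in a way that is genuinely different from, and arguably more transparent than, the paper's. The paper partitions $I$ into $n$ blocks $I_{1},\dots,I_{n}$, builds for each $\ell$ a separate tuple $(h_{\ell}^{i})_{i\in I_{\ell}}$ so that $\{1\}\cup\{h_{\ell}^{i}\}_{i\in I_{\ell}}$ is a $\mathbb{Q}$-basis of $\mathbb{R}$, and then explicitly manipulates the Kronecker criterion at a new index $j_{0}\in J\sm I$, using disjointness of the supports to assemble a nontrivial rational relation block by block. You instead dualise from the start: passing to the columns $v_{i}\in(\mathbb{R}/\mathbb{Z})^{n}$, generation of $(\mathbb{R}/\mathbb{Z})^{I}$ becomes $\mathbb{Z}$-independence of $(v_{i})_{i\in I}$ (this is exactly Kronecker's theorem in Pontryagin-dual clothing), and you then manufacture a \emph{maximal} $\mathbb{Z}$-independent family by extending the standard basis of $\mathbb{Z}^{n}$ to a single $\mathbb{Q}$-basis of $\mathbb{R}^{n}$ and reducing the extra basis vectors mod $\mathbb{Z}^{n}$. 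The paper's block-diagonal choice is in fact a special case of your construction (its $v_{i}$'s, for $i\in I_{\ell}$, lie on the $\ell$-th coordinate axis, and together with $e_{1},\dots,e_{n}$ they form exactly such a $\mathbb{Q}$-basis of $\mathbb{R}^{n}$). What your version buys is a cleanly isolated statement of the obstruction: $\mathbb{Z}$-independence gives generation downstairs, and maximality---the torsion relation $Nw=\sum m_{i}v_{i}$ for any candidate extra column $w$---is precisely what kills every lift upstairs. The paper gets the same conclusion but the maximality is implicit in the block bookkeeping rather than stated as the operative fact. All the details you rely on (Pontryagin dual of $(\mathbb{R}/\mathbb{Z})^{I}$ is $\bigoplus_{I}\mathbb{Z}$; full generation iff trivial annihilator; clearing denominators for (a) and (b); openness of the coordinate projection; $\tr{(\mathbb{R}/\mathbb{Z})^{J}}=1$) check out.
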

Before we get to the proof, let us recall Kronecker's theorem.
\begin{thm}[Kronecker's theorem]
\label{thm:Kronecker}For any index set $I$, the elements $(\alpha_{1}^{i}+\mathbb{Z})_{i\in I},\dots,(\alpha_{m}^{i}+\mathbb{Z})_{i\in I}$
in $(\mathbb{R}/\mathbb{Z})^{I}$ generate $(\mathbb{R}/\mathbb{Z})^{I}$
if and only if the following implication holds: if $i_{1},\dots,i_{n}\in I$
and $\lambda_{1},\dots,\lambda_{n}\in\mathbb{Q}$ are such that
\begin{gather*}
\lambda_{1}\alpha_{1}^{i_{1}}+\cdots+\lambda_{n}\alpha_{1}^{i_{n}}\in\mathbb{Q}\\
\vdots\\
\lambda_{1}\alpha_{m}^{i_{1}}+\cdots+\lambda_{n}\alpha_{m}^{i_{n}}\in\mathbb{Q}
\end{gather*}
then $\lambda_{j}=0$ for every $j=1,\dots,n$.
\end{thm}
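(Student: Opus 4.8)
The plan is to recognise the statement as Pontryagin duality for the compact abelian group $(\mathbb{R}/\mathbb{Z})^{I}$, together with an elementary translation between ``integer coefficients, values in $\mathbb{Z}$'' and ``rational coefficients, values in $\mathbb{Q}$''. Throughout write $T=\mathbb{R}/\mathbb{Z}$, and for a finitely supported $k\colon I\to\mathbb{Z}$ let $\chi_{k}\colon T^{I}\to S^{1}$ be the continuous character $(x_{i})_{i\in I}\mapsto\exp\bigl(2\pi i\sum_{i\in I}k_{i}x_{i}\bigr)$ (a finite sum). The first step is to show these are \emph{all} the continuous characters of $T^{I}$, i.e.\ $\widehat{T^{I}}\cong\bigoplus_{i\in I}\mathbb{Z}$. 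Given a continuous character $\chi$, choose a neighbourhood $V\ni1$ in $S^{1}$ containing no nontrivial subgroup; then $\chi^{-1}(V)$ is an open neighbourhood of the identity, hence contains a basic open neighbourhood, and therefore contains the subgroup $H_{F}=\{(x_{i})_{i}\in T^{I}\mid x_{i}=0\text{ for }i\in F\}$ for some finite $F\subseteq I$. Thus $\chi$ is trivial on $H_{F}$, so it factors through $T^{I}/H_{F}\cong T^{F}$, and since a character of a finite product of abelian groups is the product of its restrictions to the factors, $\chi=\chi_{k}$ for a unique $k$ supported on $F$.

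The second step is the standard density criterion: quotients of compact abelian groups are compact abelian, and by Peter--Weyl every nontrivial compact abelian group has a nontrivial continuous character, so a closed subgroup $D\leqslant T^{I}$ equals $T^{I}$ if and only if the only continuous character of $T^{I}$ vanishing identically on $D$ is trivial. Apply this to $D=\overline{\langle g_{1},\dots,g_{m}\rangle}$ with $g_{j}=(\alpha_{j}^{i}+\mathbb{Z})_{i\in I}$: the character $\chi_{k}$ kills $D$ iff it kills each $g_{j}$, i.e.\ iff $\sum_{i\in I}k_{i}\alpha_{j}^{i}\in\mathbb{Z}$ for every $j=1,\dots,m$. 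Hence $g_{1},\dots,g_{m}$ generate $T^{I}$ if and only if $(\star)$: the only finitely supported $k\colon I\to\mathbb{Z}$ with $\sum_{i\in I}k_{i}\alpha_{j}^{i}\in\mathbb{Z}$ for all $j$ is $k=0$.

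The third step is to check that $(\star)$ is equivalent to the condition in the theorem, reading $i_{1},\dots,i_{n}$ as pairwise distinct (as one must for the statement to be correct). For ``$(\star)\Rightarrow$ theorem'', given distinct $i_{1},\dots,i_{n}$ and $\lambda_{1},\dots,\lambda_{n}\in\mathbb{Q}$ with all $\sum_{\ell}\lambda_{\ell}\alpha_{j}^{i_{\ell}}\in\mathbb{Q}$, first multiply the $\lambda_{\ell}$ by a common denominator to make them integers, then multiply through by a common denominator of the finitely many resulting values to get an integer $N\neq0$ with all $\sum_{\ell}N\lambda_{\ell}\alpha_{j}^{i_{\ell}}\in\mathbb{Z}$; the finitely supported $k$ with $k_{i_{\ell}}=N\lambda_{\ell}$ then falls under $(\star)$, forcing $k=0$ and hence every $\lambda_{\ell}=0$. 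Conversely, a nonzero finitely supported $k$ as in $(\star)$, restricted to its support $\{i_{1},\dots,i_{n}\}$ with $\lambda_{\ell}=k_{i_{\ell}}\in\mathbb{Q}$, violates the theorem's implication, since $\sum_{\ell}\lambda_{\ell}\alpha_{j}^{i_{\ell}}=\sum_{i}k_{i}\alpha_{j}^{i}\in\mathbb{Z}\subseteq\mathbb{Q}$ for all $j$ while not all $\lambda_{\ell}$ vanish. Combining the three steps yields the theorem.

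I expect the only subtle point to be the first step — that a continuous character of the possibly non-metrisable torus $T^{I}$ has finite support — which is exactly where the compactness of $T$ (``no small subgroups'') enters; the rest is routine duality and clearing of denominators. I also note that the ``only if'' half of the theorem does not really need Peter--Weyl: if some nonzero finitely supported $k$ satisfies $\sum_{i}k_{i}\alpha_{j}^{i}\in\mathbb{Z}$ for all $j$, then $(x_{i})_{i}\mapsto\sum_{i}k_{i}x_{i}$ is a surjective continuous homomorphism $T^{I}\to T$ annihilating every $g_{j}$, so $\overline{\langle g_{1},\dots,g_{m}\rangle}$ is proper; only the ``if'' half — deducing density from $(\star)$ — requires the existence of characters on compact abelian groups, and there one may if desired first reduce to finite subproducts, using that a closed subgroup of $T^{I}$ surjecting onto every $T^{F}$ ($F\subseteq I$ finite) must be all of $T^{I}$.
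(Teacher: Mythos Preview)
Your proof is correct. It is, however, a genuinely different argument from the paper's. The paper does not compute $\widehat{T^{I}}$ at all: it simply cites the classical (finite-$I$) Kronecker theorem and then observes that, by the definition of the product topology, the closure of $\langle g_{1},\dots,g_{m}\rangle$ equals $T^{I}$ if and only if its projection to $T^{S}$ is dense for every finite $S\subseteq I$; since the condition in the statement is likewise a ``for every finite subset of indices'' condition, the general case reduces immediately to the finite one.

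Your route instead identifies $\widehat{T^{I}}\cong\bigoplus_{I}\mathbb{Z}$ via the no-small-subgroups trick, invokes the Peter--Weyl separation criterion for closed subgroups, and then clears denominators. This is more self-contained --- you never need to quote the classical Kronecker result, since your argument proves it in passing --- at the cost of invoking Peter--Weyl for a possibly non-metrisable compact abelian group. The paper's approach is shorter and more modular, pushing the analytic content into the citation. Your final remark (reducing the ``if'' direction to finite subproducts) is precisely the paper's argument, so you have in fact sketched both approaches. Your observation that the $i_{1},\dots,i_{n}$ must be read as pairwise distinct for the statement to be correct is also well taken.
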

The theorem was proved in \cite{Kro} for finite index sets $I$.
The general case follows easily; if $(\alpha_{1}^{i}+\mathbb{Z})_{i\in I},\dots,(\alpha_{m}^{i}+\mathbb{Z})_{i\in I}$
are elements in $(\mathbb{R}/\mathbb{Z})^{I}$ and $\varGamma$ is the group they abstractly
generate, then the condition of the theorem is satisfied if and only if for any finite subset $S\subseteq I$ the projection
of $\varGamma$ in $(\mathbb{R}/\mathbb{Z})^{S}$ is dense. By definition of the product topology, this is equivalent
to $\varGamma$ being dense.
\begin{proof}[Proof of the Proposition]
Let $n\in\mathbb{N}$. Take disjoint $I_{1},\dots,I_{n}\subseteq I$
such that $\bigcup_{\ell=1}^{n}I_{\ell}=I$ and $\left|I_{1}\right|=\cdots=\left|I_{n}\right|=2^{\aleph_{0}}$.
Take 
\[
h_{1}=(h_{1}^{i}+\mathbb{Z})_{i\in I},\dots,h_{n}=(h_{n}^{i}+\mathbb{Z})_{i\in I}\in(\mathbb{R}/\mathbb{Z})^{I}
\]
such that for each $\ell=1,\dots,n$ the tuple $(h_{\ell}^{i})_{i\in I_{\ell}\cup\{0\}}$, where $h_0\coloneqq 1$,
is a basis of $\mathbb{R}$ over $\mathbb{Q}$, and $h_{\ell}^{i}=0$
for all $i\notin I_{\ell}\cup\{0\}$. By Kronecker's theorem, $h_{1},\dots,h_{n}\in(\mathbb{R}/\mathbb{Z})^{I}$
generate $(\mathbb{R}/\mathbb{Z})^{I}$. However, by the same theorem
the following computation shows no lifts of $h_{1},\dots,h_{n}$ to
$G$ could ever generate it.

Let $g_{1},\dots,g_{n}\in(\mathbb{R}/\mathbb{Z})^{J}$ be lifts of
$h_{1},\dots,h_{n}\in(\mathbb{R}/\mathbb{Z})^{I}$, and pick $j_{0}\in J\sm I$.
Then for each $\ell=1,\dots,n$ there are $i_{1}^{\ell},\dots,i_{d_{\ell}}^{\ell}\in I_{\ell}$
and $\lambda^{\ell},\lambda_{1}^{\ell},\dots,\lambda_{d_{\ell}}^{\ell}\in\mathbb{Q}$
such that 
\[
\left(\sum_{j=1}^{d_{\ell}}\lambda_{j}^{\ell}h_{\ell}^{i_{j}^{\ell}}\right)=\lambda^{\ell}-g_{\ell}^{j_{0}}
\]
(since $\{1\}\cup\{h_{\ell}^{i}\}_{i\in I_{\ell}}$ is a basis of
$\mathbb{R}$ over $\mathbb{Q}$ for each $\ell$). Since
for each $\ell=1,\dots,n$ we have $g_{\ell}^{i}=h_{\ell}^{i}=0$
for every $i\in I\sm I_{\ell}$, we will have for each $\ell=1,\dots,n$
and $k\neq\ell$ that 
\[
\sum_{j=1}^{d_{k}}\lambda_{j}^{k}g_{\ell}^{i_{j}^{k}}=0.
\]
Therefore we have for each $\ell=1,\dots,n$: 
\begin{align*}
\left(\sum_{k=1}^{n}\sum_{j=1}^{d_{k}}\lambda_{j}^{k}g_{\ell}^{i_{j}^{k}}\right)+g_{\ell}^{j_{0}} & =\left(\sum_{j=1}^{d_{\ell}}\lambda_{j}^{\ell}g_{\ell}^{i_{j}^{\ell}}\right)+g_{\ell}^{j_{0}}\\
 & =\left(\lambda^{\ell}-g_{\ell}^{j_{0}}\right)+g_{\ell}^{j_{0}}=\lambda^{\ell}\in\mathbb{Q}.
\end{align*}
Thus, by Kronecker's theorem, $g_{1},\dots,g_{n}$ do not generate
$(\mathbb{R}/\mathbb{Z})^{J}$.
\end{proof}

\begin{rem}
This result suggests the following definition: the \emph{Gasch{\"u}tz
rank }of a group $G$, denoted by $\gas G$, is the minimal $m$ such
that every open epimorphism $f:G\to H$ and every finite tuple of generators ($h_{1},\dots,h_{n})\in H^n$
with $n\geqslant m$ admit a generating lift $(g_{1},\dots,g_{n})\in G$.
Then $(\mathbb{R}/\mathbb{Z})^{2^{\aleph_{0}}}$ is not only not Gasch{\"u}tz,
but is of infinite Gasch{\"u}tz rank. It is not too hard to show $\gas{\mathbb{R}^{n}\times\mathbb{T}^{m}}=2n+m$ if $n>0$,
so if in addition $n\geqslant 2$ or $m\geqslant1$ then $\mathbb{R}^{n}\times\mathbb{T}^{m}$
is not Gasch{\"u}tz (because $\tr{\mathbb{R}^{n}\times\mathbb{T}^{m}} = n+1$) but is still of finite Gasch{\"u}tz rank.
\end{rem}

\appendix

\end{document}